\newtheorem{Theorem}{Theorem}[section]
\newtheorem{Lemma}[Theorem]{Lemma}
\theoremstyle{definition}
\newtheorem{Remark}[Theorem]{Remark}
\newcommand{\sEnd}{\mbox{${\sE}{nd}$}}
\newcommand{\boxtensor}{{\Box\kern-9.03pt\raise1.42pt\hbox{$\times$}}}
\newcommand{\tensor}{\otimes}
\newcommand{\sE}{{\mathcal E}}
\newcommand{\sO}{{\mathcal O}}
\numberwithin{equation}{section}
\newcounter{elno}                % This to number lists
\newcounter{example}[section]
\title{Some Further Remarks on the Local Fundamental Group Scheme}
\author{V.B. Mehta\protect\footnote{email:vikram@math.tifr.res.in}}
\begin{document}
\maketitle

\begin{abstract}
  We prove that the Local  Fundamental Group Scheme satisfies the Lefschetz-
Bott theorems in characteristic $p$. The proofs are standard applications 
of the Enriques-Severi-Zariski-Serre vanishing theorems and known facts 
about the $p$-curvature .
  
\end{abstract}

%\address{School of Mathematics, Tata Institute of
%Fundamental Research,
%Homi Bhabha Road, Mumbai-400005, India}
%\myemail{vikram@math.tifr.res.in}

%\subjclass{14F20,14L15}

\date{}

\section{Introduction}
 
The results of this paper were already proved by Indranil Biswas and 
Yogesh Holla ( arXiv.math /0603299 v1 [math AG], 13 March 2006,  
arXiv.math /0603299v1 [math AG], 1st May 2007, and finally published in 
the Journal of Algebraic Geometry, Vol. 16, No.3, 2007, pages [547-597].
The present work was done later ( around late 2006-early 2007) but 
independently. A preliminary version of the present work was put on 
the arXiv in January 2007. This has been withdrawn as soon as we were 
informed that Biswas  and Holla had already put up their paper on the 
arXiv in March 2006. 

However since our proofs are shorter and more suited for future applications 
( "On the Grothendieck-Lefschetz Theorem for a family of Varieties " ,
with Marco Antei, in preparation ),  we are putting a shortened version 
on the arXiv.

The Fundamental Group Scheme was introduced by Madhav Nori in [N1,N2].
 In order to prove the conjectures made in [loc.cit], S.Subramanian
 and the present author had introduced the "Local Fundamental Group 
 Scheme ", denoted by $\pi^{loc}(X)$[MS1], which is the infinitesimal 
part of $\pi(X)$,[MS2]. We had also introduced the notion of a 
\emph{$F$-trivial} vector bundle on a variety in characteristic 
$p$[loc.cit.]  One  may ask whether the Lefschetz-Bott theorems  hold for  
$\pi^{loc}(X)$. In  other words, let $X$ be a smooth projective  variety 
over an algebraically  closed filed of characteristic $p$,and let
 $H$ be a very ample line bundle on $X$. The question is if $\pi^{loc}(Y)
 \to \pi^{loc}(X)$ is a {\it surjection} if dim $X \geq 2$ and degree $Y
 \geq n_0$, where $n_0$ is an integer depending {\it only} on $X$.
 Similiarly , if dim $X \geq 3$,the question is if $\pi^{loc}(Y) \to
 \pi^{loc}(X)$ is an {\it isomorphism} if deg $Y \geq n_1$, where $n_1$
 is an integer depending {\it only} on $X$.

  We give a positive answer to   both these  questions. The methods  
only involve applying the lemma of Enriques-Severi-Zariski-Serre ( E-S-Z-S) 
over and over again. We also heavily use the well-known facts about the 
$p$-curvature  of 
integrable connections in characteristic $p$ [K1].  The 
notation is the same as in [MS2],which is briefly
 recalled. We thank Vijaylaxmi Trivedi and Vittorio for helpful 
discussions. This work was completed while the author was visiting the ICTP, 
Trieste as a Senior Research Associate. We would like to thank the ICTP 
for its support and hospitality during that period. Our method of proof
is heavily influenced by a paper of Karen Smith  [S], especially
[Thm. 3.5].  All the facts about Tannaka Categories that we 
use may be found in [N1]. 

\section{ The first theorem}

\begin{Theorem}\label{t1}Let $X$ be a non-singular projective variety over
 an algebraically closed field of characteristic  $p$,of dimension $\geq 
2$. Let $H$ be a very ample line bundle on $X$. Then there exists an 
integer $n_0$, depending {\it only} on $X$, such that for any smooth $Y 
\in |nH|$, of deg $n \geq n_0$, the canonical map : $\pi^{loc}(Y)  \to 
 \pi^{loc}(X)$ is a surjection.
\end{Theorem}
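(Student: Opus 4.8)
The plan is to translate surjectivity of $\pi^{loc}(Y)\to\pi^{loc}(X)$ into the Tannakian statement that the restriction functor $\mathrm{res}\colon \{F\text{-trivial bundles on }X\}\to\{F\text{-trivial bundles on }Y\}$ is fully faithful and has image closed under subobjects (the criterion of [N1] for a homomorphism of affine group schemes to be faithfully flat). Both properties are cohomological. Writing $Y\in|nH|$ via
\[
0\to \sO_X(-nH)\to \sO_X\to \sO_Y\to 0
\]
and tensoring by a locally free $W$, full faithfulness for a pair $(E,E')$ amounts to the restriction isomorphism $H^0(X,W)\cong H^0(Y,W|_Y)$ with $W=E^{\vee}\tensor E'$, while the subobject condition is obtained by extending the defining (Pl\"ucker) sections of a sub-bundle of $E|_Y$; in each case the obstruction is governed by $H^0$ and $H^1$ of a negative twist. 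Thus the whole theorem reduces to the following \emph{uniform} vanishing: there is $n_0=n_0(X)$ such that for every $F$-trivial bundle $W$ on $X$ and every $n\ge n_0$,
\[
H^0(X,W(-nH))=0=H^1(X,W(-nH)).
\]
The force of the statement is that $n_0$ must be independent of both the rank and the Frobenius level of $W$, since $W=E^{\vee}\tensor E'$ ranges over all ranks and over all finite quotients of $\pi^{loc}(X)$.

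For $H^0$ this is immediate and holds for all $n\ge 1$. If $(F^{m})^{*}W\cong\sO_X^{\oplus r}$, then the absolute Frobenius $F$ is finite and faithfully flat on the smooth $X$, so $F^{*}$ is a faithful exact functor and $H^0(X,W(-nH))\hookrightarrow H^0\big(X,(F^{m})^{*}(W(-nH))\big)$. Since $(F^{m})^{*}\sO_X(-nH)=\sO_X(-p^{m}nH)$, the target is $H^0(X,\sO_X(-p^{m}nH))^{\oplus r}=0$ for every $n\ge 1$, whence $H^0(X,W(-nH))=0$.

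The real work is the uniform vanishing of $H^1(X,W(-nH))$, and this is where the facts about $p$-curvature enter. By Cartier's theorem [K1] an $F$-trivial bundle carries a canonical integrable connection with (iteratively) vanishing $p$-curvature; consequently it is numerically flat, hence semistable of degree $0$ with trivial Chern classes. Two consequences are used. First, the unit of the adjunction embeds $W$ (and $W^{\vee}$) as a sub-bundle of $\big((F^{m})_{*}\sO_X\big)^{\oplus s}$, so the cohomology of $W(-nH)$ is squeezed by that of the Frobenius push-forwards $(F^{m})_{*}\sO_X(-nH)$. Second, the projection formula together with $(F^{m})^{*}\sO_X(H)=\sO_X(p^{m}H)$ gives, for any line bundle $\sL$,
\[
H^{i}\big(X,(F^{m})_{*}\sL\tensor\sO_X(-nH)\big)=H^{i}\big(X,\sL^{\tensor p^{m}}(-p^{m}nH)\big),
\]
and dually, via Serre duality, the relevant $H^{1}$ is converted into $H^{d-1}$ of a high tensor power $\big(\omega_X(nH)\big)^{\tensor p^{m}}$ of the bundle $\omega_X(nH)$. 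Choosing $n_0$ by Enriques--Severi--Zariski--Serre so that $\omega_X(nH)$ is ample and all of its positive tensor powers have vanishing higher cohomology (here $\dim X\ge 2$ guarantees $d-1\ge 1$) yields a bound valid simultaneously for every level $m$ and every rank. Feeding this into the adjunction sequence $0\to W\to\big((F^{m})_{*}\sO_X\big)^{\oplus s}\to C\to 0$ and iterating the E-S-Z-S vanishing up the d\'evissage of $C$ (the ``over and over'' step, modelled on [S, Thm.~3.5]) gives $H^1(X,W(-nH))=0$ for all $n\ge n_0$.

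\emph{Main obstacle.} The single delicate point is precisely this simultaneity: producing one $n_0(X)$ that kills $H^1(X,W(-nH))$ for \emph{all} $F$-trivial $W$ at once, across unbounded rank and unbounded Frobenius level. The resolution is the displayed projection-formula identity, which turns every such $H^1$ into higher cohomology of tensor powers of the \emph{fixed} ample bundle $\omega_X(nH)$; once these vanish uniformly by E-S-Z-S, both rank and level drop out. With the uniform vanishing in hand, full faithfulness of $\mathrm{res}$ follows from the twisted sequence, and the subobject condition follows by extending the defining sections of an $F$-trivial sub-bundle $S\subset E|_Y$ across $X$ (the extended saturated subsheaf is again numerically flat, hence $F$-trivial), giving the required faithful flatness of $\pi^{loc}(Y)\to\pi^{loc}(X)$.
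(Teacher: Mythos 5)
Your cohomological core coincides with the paper's: uniform vanishing of $H^0(X,W(-nH))$ and $H^1(X,W(-nH))$ for $F$-trivial $W$, extracted from the unit sequence $0\to W\to (F^m_*\sO_X)^{\oplus s}\to C\to 0$ and E-S-Z-S applied to $H^1(\sO_X(-p^mnH))$. But there is a genuine gap in your group-theoretic reduction. You use the criterion that $\pi^{loc}(Y)\to\pi^{loc}(X)$ is faithfully flat iff restriction is fully faithful \emph{and} its image is closed under subobjects; the full-faithfulness half is fine, but your verification of the subobject half --- ``extending the defining (Pl\"ucker) sections of an $F$-trivial sub-bundle $S\subset E|_Y$ across $X$'' --- is not a proof and cannot work as stated when $\dim X=2$, i.e.\ when $Y$ is a curve. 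The Pl\"ucker datum of $S$ is a section of $\wedge^s(E|_Y)\otimes(\det S)^{-1}$, and $\det S$ is a $p$-power torsion line bundle that lives only on $Y$: there is no sheaf on $X$ whose negative twists could ``govern the obstruction'' to extending it, and there is no Lefschetz theorem for $\Pic$ in dimension $2$ (a smooth plane curve has large $p$-torsion in its Jacobian, while $\Pic {\mathbb P}^2=\Z$). Extension of objects from $Y$ to $X$ is a $\dim\geq 3$ phenomenon --- it is precisely the content of the paper's second theorem. The paper avoids the issue entirely by using Nori's criterion instead: surjectivity holds iff every N-reduced principal bundle $E\to X$ stays N-reduced on $Y$, i.e.\ iff $H^0(Y,W|_Y)=k$ for $W=f_*\sO_E$; this requires only the restriction isomorphism $H^0(X,W)\cong H^0(Y,W|_Y)$, which your vanishing already provides, and the subobject condition is then automatic from Tannaka duality (subobjects of $V|_Y$ correspond to subrepresentations of its monodromy group scheme, which restriction does not change). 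So the repair is to switch criteria, not to extend sections.

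There is also a gap inside the $H^1$-vanishing itself. In your sequence the cokernel is $C\cong W\otimes B^1_m$ by the projection formula, so the connecting-map obstruction is $H^0(W\otimes B^1_m(-nH))$, a group depending on both $W$ and the Frobenius level $m$. ``Iterating the E-S-Z-S vanishing up the d\'evissage of $C$'' does not dispose of it: E-S-Z-S concerns one fixed coherent sheaf, and your Serre-duality detour through powers of $\omega_X(nH)$ only re-proves the easy middle-term vanishing $H^1(\sO_X(-p^mnH))=0$, where uniformity in $m$ is free because $p^mn\geq n$. What is actually needed, and what the paper isolates, is (i) $\Hom(V(n),B^1)=0$ for all $V\in FT(X)$ and all $n\geq n_0$ with $n_0$ independent of $V$, which comes from semistability of slope $0$ of $F$-trivial bundles played against the single fixed sheaf $B^1$ (a slope bound, not E-S-Z-S); and (ii) an induction on the level using $0\to B^1\to B^1_m\to F_*B^1_{m-1}\to 0$ and the fact that $F^*$ preserves $F$-triviality, so the same $n_0$ survives at every level. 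Finally, your displayed identity should read $(F^m)_*\sL\otimes\sO_X(-nH)\cong (F^m)_*\bigl(\sL(-p^mnH)\bigr)$, giving $H^i\bigl(\sL(-p^mnH)\bigr)$ rather than $H^i\bigl(\sL^{\otimes p^m}(-p^mnH)\bigr)$; this is harmless for $\sL=\sO_X$ but symptomatic of the conflation between the fixed twisting bundle and the varying Frobenius level.
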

 
Before we begin the proof, we recall some facts from [MS2]. Let $F: X
 \to X$ be the Frobenius map. For any integer $t \geq 1$, denote by $C_t$
the category  of all $V \in Vect(X)$ such that $F^{t*}(V)$ is the  
 { \it trivial} vector bundle on $X$. Let $FT(X)$ denote the union of 
  $C_t(X)$ for all $t \geq1$. Fix a  base point $x_0 \in X$,
Consider the functor $S : C_t \to Vect (X)$ given by $V \to V_{x_0}$  
It is seen that $C_t$ , with the fibre functor $S$, is a Tannaka
category [MS1,MS2]. denote the corresponding Tannaka group by $G_t$. 
It is also easily seen that $$\pi^{loc}(X) \simeq \lim_{\leftarrow t} 
G_t$$, where $\pi^{loc}(X)$ is the Tannaka group associated to the
category $(V \in Vect (X)\ F^t(V)$ is trivial for some $t$.

Now let $t = 1$, and consider $G_1(Y)$ and $G_1(X)$. Let $V \in C_1(X)$.
with $V$ stable.$V$ corresponds to a principal H bundle $E \rightarrow X$ 
,where $E$ is {\it reduced} in the sense of Nori [N1,page 87,Prop.3], or 
just N-reduced. 
To prove that $G_1(Y) \to G_1(X)$ is a surjection , it is enough to 
prove that $V/Y$ is {\it stable}, or better still that $E/Y$ is 
{\it N-reduced}.

\begin{Lemma}\label{2}
 If X and E are as above , then there exists an integer $n_0$,
 depending only on $X$ ,such that for all $n \geq n_0$, and for all 
 smooth $Y \in |nH|$, $E/Y$ is N-reduced.
\end{Lemma}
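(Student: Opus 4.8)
The plan is to reduce the assertion that $E|_Y$ is $N$-reduced to a cohomological statement about the adjoint bundle and then to dispatch that statement by the E-S-Z-S vanishing lemma. Recall that $V\in C_1(X)$ carries its canonical integrable connection $\nabla$ coming from the chosen trivialization of $F^*V$, and that a reduction of structure group of $E$ to a proper subgroup scheme is the same as a $\nabla$-horizontal sub-object of $V$ in the Tannaka category, hence, by the standard facts on $p$-curvature in [K1], the same as a horizontal sub-bundle of $\End V$ stable under the $p$-curvature. Thus $E$ being $N$-reduced means precisely that the only such horizontal sub-objects are the trivial ones, and $E|_Y$ will be $N$-reduced as soon as every horizontal sub-object of $V|_Y$ over $Y$ is the restriction of one over $X$.

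First I would record the two elementary facts that make the relevant cohomology uniform. Since restriction commutes with Frobenius, $F^*(V|_Y)=(F^*V)|_Y$ is again trivial, so $V|_Y\in C_1(Y)$ and the restriction functor $C_1(X)\to C_1(Y)$ is well defined; the same holds for $\End V$. Moreover the unit of adjunction embeds any $\sF\in C_1(X)$ into $(F_*\sO_X)^{\oplus N}$, and by the projection formula $H^0(X,(F_*\sO_X)\otimes H^{-n})=H^0(X,H^{-pn})=0$ for $n\ge 1$; hence $H^0(X,\sF\otimes H^{-n})=0$ for all $n\ge1$, with no dependence on $\sF$. Applied to $\sF=\End V$ and the restriction sequence
\[
0\to \End V\otimes H^{-n}\to \End V\to (\End V)|_Y\to 0,
\]
this already shows that $H^0(X,\End V)\to H^0(Y,(\End V)|_Y)$ is injective, so no horizontal endomorphism is lost under restriction.

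The substance of the proof is the reverse direction: every horizontal endomorphism, and more generally every reducing sub-object, over $Y$ must extend to $X$. From the long exact sequence this is controlled by the connecting map into $H^1(X,\End V\otimes H^{-n})$, so it suffices to prove $H^1(X,\End V\otimes H^{-n})=0$ for $n$ large. For a single $V$ this is immediate from Serre duality together with Serre vanishing, since $\dim X\ge2$ forces the relevant higher cohomology to vanish for $n\gg0$; the $p$-curvature then guarantees that the extended endomorphism is again horizontal, so a reduction of $E|_Y$ would produce a reduction of $E$, contradicting $N$-reducedness on $X$.

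The one genuinely delicate point, and the place where the requirement that $n_0$ depend only on $X$ must be earned, is the uniformity of this last vanishing as $V$ ranges over all stable objects of $C_1(X)$ of arbitrary rank. Here I would invoke the boundedness of the family of $F$-trivial bundles established in [MS2]: on a bounded family, Grothendieck's uniform vanishing theorem produces a single threshold $n_0$, depending only on $X$ and the discrete invariants of the family, beyond which $H^1(X,\End V\otimes H^{-n})=0$ simultaneously for all members, and reducing the Tannakian surjectivity to a set of generators removes the apparent dependence on the rank. With that uniform $n_0$ in hand, the argument above shows that $E|_Y$ is $N$-reduced for every smooth $Y\in|nH|$ with $n\ge n_0$.
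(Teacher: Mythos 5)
There are two genuine gaps, and they sit at the two load-bearing points of the lemma. The first is your structural criterion. You reduce N-reducedness of $E|_Y$ to the statement that every ``horizontal sub-object of $V|_Y$'', equivalently every horizontal sub-bundle of $\End V$ stable under the $p$-curvature, extends to $X$. This is not a valid translation. Reductions of structure group to proper subgroup schemes of a finite \emph{local} group scheme are not detected by sub-bundles of $\End V$: already in rank one, $\End V \cong \sO_X$ has no non-trivial sub-objects whatsoever, while the torsor attached to a $p$-torsion line bundle $V \in C_1(X)$ is N-reduced precisely when $V \not\cong \sO_X$, a condition your criterion cannot see. Moreover, even if one replaces your criterion by the correct Tannakian one (sub-objects of $V$ and of all its tensor constructions, plus full faithfulness), extending a sub-bundle of $V|_Y$ to $X$ is not the same problem as extending a section of $\End V|_Y$, so the vanishing $H^1(X,\End V(-n))=0$ you aim at would not close the argument. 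The paper sidesteps all of this with Nori's cohomological criterion [N1, p.~87, Prop.~3]: $E$ is N-reduced if and only if $h^0(X, f_*\sO_E)=1$. Writing $W=f_*\sO_E \in FT(X)$, the lemma becomes exactly the statement $h^0(Y, W|_Y)=1$, which follows from $H^0(X,W(-n))=0$ (your adjunction argument proves precisely this, and that part of your proposal is correct and in the spirit of the paper) together with $H^1(X,W(-n))=0$ for a uniform $n$.

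The second gap is the uniformity itself, which is the entire content of the lemma. You propose boundedness of the family of $F$-trivial bundles plus Grothendieck's uniform vanishing on a bounded family. But the objects of $C_1(X)$, let alone $FT(X)$, have \emph{unbounded rank}, and a bounded family has in particular a fixed Hilbert polynomial; boundedness can only be invoked rank by rank, which reintroduces a dependence of $n_0$ on $V$. Your remedy of ``reducing the Tannakian surjectivity to a set of generators'' is not available: these Tannakian categories are not finitely tensor-generated (in general $\pi^{loc}(X)$, and already $G_1$, is an inverse limit of infinitely many finite local group schemes), and surjectivity must be tested on all stable objects, equivalently on all N-reduced torsors, of arbitrarily large rank. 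The paper obtains rank-independent bounds by a Frobenius trick: tensoring $0 \to \sO_X \to F_*\sO_X \to B^1 \to 0$ with $W(-n)$ and using the projection formula, one finds that as soon as $H^0(B^1 \otimes W(-n))=0$ --- which holds for all $W\in FT(X)$ and all $n \geq n_0$ with $n_0$ depending only on the \emph{fixed} sheaf $B^1$, by the Frobenius-pullback slope argument of [S, Thm.~3.5] --- the map $H^1(W(-n)) \to H^1(F^*(W(-n))) \cong H^1(\sO_X(-np))^{\oplus r}$ is injective, and the target vanishes for uniform $n$ by E-S-Z-S because $\dim X \geq 2$. Every constant in that chain involves only $B^1$ and $\sO_X$, never $V$ or its rank; this mechanism is what your proposal is missing, and without it the assertion that $n_0$ depends only on $X$ is not proved.
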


\begin{proof}

Let $f: E \rightarrow X$ be given.Then $f_*{\sO_E}$ belongs to $C_1$.
Denote it by $W$ for simplicity. It is easy to see that $W \in FT(X)$.
Also note that for any $V \in Vect (X), V \in FT(X)$ if and only if 
 $V^* \in  FT(X)$. 
Now  consider $$0 \rightarrow {\sO_X}
 \rightarrow F_*{\sO_X} \rightarrow B^1 \rightarrow 0\eqno{(1)}$$,
 Tensor (1) with $W^*(-n)$. There is an $n_0$ such that for 
 $n \geq n_0$, we have $Hom(W(n), B^1) = 0$. In fact, we have that 
$Hom(V(n),B^1)= 0$ for all $V \in FT(X)$, for all $ n > n_0$, where $n_0$
is \emph {independent} of $V$ in $FT(X)$. So the canonical map
 $H^1(W(-n) \rightarrow H^1(F^*(W(-n))$ is injective. But the last space 
  is just $H^1({\sO_X}(-np))^r$, $r = rank W$.  But this is $0$ as soon 
as $n \geq n_1$, independent of $V \in C_1(X)$, as dim $X \geq 2$. Hence 
$H^0(Y,W/Y) =1$,  which proves that  $E$ restricted to $Y$ is also 
N-reduced.

Now assume $t \geq 2$. We shall assume in fact that $t = 2$, because an 
 identical proof works for $t \geq 3$.Consider the map $F^2 : X 
\rightarrow X$. Let $B_2^1$ be the cokernel.We have the exact sequence$$
0 \rightarrow B^1 \rightarrow B_2^1  \rightarrow F_*B_1 \rightarrow 
0\eqno{(2)}$$
Let $W \in C_2(X)$ and tensor (2) with $W(-n)$. One sees 
immediately that if $H^1(W(-n){\tensor}B^1) = 0$  for all $n \geq n_0$,
then also $H^1((W(-n){\tensor}B_2^1) = 0$  for all $n \geq n_0$, for the
{\it same} integer $n_0$. So if  E is N-reduced on $X$, then E remains
 N-reduced on $Y$, if deg $Y \geq n_0$ , for the same integer $n_0$, which
 worked for $C_1(X)$. The proof for bigger $t$ goes the same way , by 
 taking the cokernel of $F^t$, where $t \geq3$. This concludes the proof 
of Theorem 2.1. 
\end{proof}

\section{The second theorem}

\begin{Theorem}\label{t2}Let $X$ be smooth and projective of dim $\geq 3$. 
Then there exists an integer $n_0$, depending only on $X$ ,such that for 
any smooth $Y \in |nH|,n \geq n_0$, the canonical map $\pi^{loc}(Y)
\to \pi^{loc}(X)$ is an isomorphism. 
 \end{Theorem}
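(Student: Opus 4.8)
The plan is to prove an isomorphism by establishing surjectivity and injectivity separately, following the Tannakian dictionary set up in the previous section. Surjectivity of $\pi^{loc}(Y) \to \pi^{loc}(X)$ already follows from Theorem \ref{t1}, since $\dim X \geq 3 \geq 2$, so the new content is \emph{injectivity}. In Tannakian terms, injectivity of the map of group schemes is equivalent to the statement that every $F$-trivial bundle on $Y$ extends (and descends) from an $F$-trivial bundle on $X$, i.e.\ that the restriction functor $C_t(X) \to C_t(Y)$ is essentially surjective and fully faithful for each $t$. So the heart of the argument is a restriction/extension result: given $V \in C_t(Y)$, I would show that $V$ is the restriction of some $\tilde V \in C_t(X)$, and that $\Hom$ and $\Ext^1$ groups are unchanged under restriction.

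First I would reduce, exactly as in Lemma \ref{2}, to the case $t=1$, since the cokernel filtration of $F^t$ (sequences (1) and (2)) lets one bootstrap from $C_1$ to all $C_t$ with the \emph{same} bound $n_0$; here the extra hypothesis $\dim X \geq 3$ is what upgrades the $H^1$-injectivity statement used before into full control of both $H^0$ and $H^1$ on the complement. Concretely, for the fully faithful part I would take two bundles $V, V' \in C_1(X)$ and compare $\Hom_X(V,V')$ with $\Hom_Y(V|_Y, V'|_Y)$ via the ideal-sheaf sequence $0 \to V^*\tensor V'(-n) \to V^*\tensor V' \to (V^*\tensor V')|_Y \to 0$; applying E-S-Z-S vanishing to $H^0$ and $H^1$ of the twisted bundle $V^*\tensor V'(-n)$ (which, being in $FT(X)$, satisfies the uniform vanishing established in the proof of Lemma \ref{2}) gives that restriction is an isomorphism on $\Hom$ once $n \geq n_0$. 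For essential surjectivity I would combine a Grothendieck--Lefschetz-type extension of the bundle across $X$ (using $\dim X \geq 3$ so that $H^1$ and $H^2$ obstruction groups of the relevant negative twists vanish) with the observation that $F$-triviality of the extension can be checked after pulling back by Frobenius, again reducing to a vanishing statement that the $p$-curvature formalism of [K1] makes uniform in the bundle.

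The main obstacle I expect is the \emph{essential surjectivity} step: producing an $F$-trivial extension $\tilde V$ on $X$ from $V$ on $Y$ and verifying that $F^{t*}\tilde V$ is genuinely trivial (not merely trivial on $Y$). The formal/algebraization part of extending a bundle from an ample divisor to its ambient space in dimension $\geq 3$ is standard E-S-Z-S input, but one must check that the extension stays inside $FT(X)$ uniformly, and this is where the $p$-curvature description of integrable connections — which detects $F$-triviality — should be invoked to show that a connection on $V$ with vanishing $p$-curvature extends to one on $\tilde V$ with vanishing $p$-curvature. Once both halves are in place, the uniform bound $n_0$ is obtained, as in Lemma \ref{2}, by taking the maximum of the finitely many bounds coming from the cokernel sequences for $F^t$, and the isomorphism of Tannaka groups follows by passing to the inverse limit over $t$.
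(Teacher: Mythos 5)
Your surjectivity half (quoting Theorem \ref{t1}) and the Tannakian reformulation of injectivity (every $W \in C_t(Y)$ should extend to some $V \in C_t(X)$, with $\Hom$'s preserved) match the paper. The genuine gap is in your essential-surjectivity step. You propose to produce $\tilde V$ by a Grothendieck--Lefschetz-type extension of the bundle across $X$, asserting that ``$\dim X \geq 3$'' makes ``the $H^1$ and $H^2$ obstruction groups of the relevant negative twists vanish.'' But the obstruction to extending a locally free sheaf from the $k$-th to the $(k+1)$-st infinitesimal neighborhood of $Y$ lives in $H^2(Y, \sEnd W \otimes I^k/I^{k+1}) = H^2(Y, \sEnd W(-kn))$ --- cohomology \emph{on $Y$}, not on $X$. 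When $\dim X = 3$ the hypersurface $Y$ is a surface, and by Serre duality this group is dual to $H^0(Y, (\sEnd W)^* \otimes \omega_Y(kn))$, which is nonzero (in fact large) for $k \gg 0$: $F$-triviality of $\sEnd W$ gives no vanishing for \emph{positive} twists. This is exactly why the Grothendieck--Lefschetz theorem for vector bundles requires $\dim X \geq 4$; your mechanism therefore cannot prove the theorem in the critical lowest-dimensional case $\dim X = 3$, which the statement includes.

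The paper sidesteps this by never extending the bundle directly; it extends the \emph{descent datum}. For $t=1$, since $F^*W$ is trivial, Cartier descent identifies $W$ with the trivial bundle $\sO_Y^r$ equipped with an integrable connection of $p$-curvature zero, i.e.\ with an $r \times r$ matrix $M$ of $1$-forms on $Y$. E-S-Z-S applied to the conormal sequences gives, for uniform $n$, an isomorphism $H^0(X,\Omega^1_X) \to H^0(Y,\Omega^1_Y)$, so $M$ lifts to $M_1$ on $X$; integrability and vanishing of $p$-curvature of the lifted connection are then checked through injectivity (again for uniform $n$) of $H^0(\sEnd \sO_X^r \otimes \Omega^2_X) \to H^0(\sEnd \sO_Y^r \otimes \Omega^2_Y)$ and of the corresponding map for $F^*\Omega^1$; Frobenius descent on $X$ finally produces $V \in C_1(X)$ with $V|_Y \cong W$. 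For $t > 1$ the paper inducts on $t$ via Claims 1--4 (lift $F^*W$, show the lift carries an integrable $p$-flat connection, descend under $F$), not via the $B^1$, $B^1_2$ cokernel sequences you propose to recycle from Lemma \ref{2}; the hypothesis $\dim X \geq 3$ enters only through $H^2$-vanishing of negative twists \emph{on $X$} (Claim 2), never through an $H^2$ on $Y$. To rescue your outline you would have to show that the actual obstruction \emph{classes} vanish even though the obstruction \emph{groups} do not --- which is, in effect, what the Cartier-descent construction accomplishes for free.
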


\begin{Remark}In the sequel we shall use the phrase " for a uniform $n$" 
 or" there exists a uniform integer $n$" to denote a positive integer $n$ 
,which may depend on $X$ , but not on $V$ , for $V \in FT(X)$.
\end{Remark}

Before we begin the proof we need the following lemma:

\begin{Lemma}With X as above, then there exists a uniform integer $n_0$
 such that  we have 
$H^1(\Omega^1_X( -n) \otimes V) = 0$, for all $V \in FT(X)$, for all $n  
\geq n_0$. 
\end{Lemma}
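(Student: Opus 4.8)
The plan is to imitate the Frobenius‑descent argument used for Lemma~\ref{2}, but carrying the \emph{fixed} coefficient bundle $\Omega^1_X$ along throughout, and to control every resulting error term uniformly in $V$ by the strong semistability of $F$‑trivial bundles. Recall that each $V\in FT(X)$ is strongly semistable of slope $0$ (some $F^{t*}V$ is trivial, hence semistable, so $V$ stays semistable after every Frobenius pullback, and $\deg V=0$). Thus for any fixed bundle $\sG$ one has $\mu_{\max}(\sG\otimes V)\le\mu_{\max}(\sG)$, whence $\mu_{\max}\bigl(\sG\otimes V(-n)\bigr)\le\mu_{\max}(\sG)-n\,H^{\dim X}<0$ as soon as $n>\mu_{\max}(\sG)/H^{\dim X}$, a bound independent of $V$. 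Since a sheaf of negative maximal slope has no sections, $H^0(\sG\otimes V(-n))=0$ for all $V\in FT(X)$ and all large $n$, uniformly. I will use this with $\sG$ built from $\Omega^1_X$ and the bundles $B^1,B^1_t$.

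First I treat $V\in C_1$. Tensoring sequence $(1)$ by $\Omega^1_X\otimes V(-n)$ and applying the projection formula gives
\[
0\to\Omega^1_X\otimes V(-n)\to F_*\bigl(F^*\Omega^1_X\otimes F^*V(-np)\bigr)\to\Omega^1_X\otimes V\otimes B^1(-n)\to 0 .
\]
As $F^*V$ is trivial of rank $r=\rank V$, the middle term has cohomology $H^i\bigl(F^*\Omega^1_X(-np)\bigr)^r$; here $F^*\Omega^1_X$ is a \emph{fixed} bundle, so $H^1\bigl(F^*\Omega^1_X(-np)\bigr)=0$ for $n$ large by Serre duality together with Serre vanishing (using $\dim X\ge 2$). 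The long exact sequence then exhibits $H^1(\Omega^1_X\otimes V(-n))$ as a quotient of $H^0(\Omega^1_X\otimes V\otimes B^1(-n))$, which vanishes uniformly by the slope estimate applied to $\sG=\Omega^1_X\otimes B^1$. This settles $C_1(X)$ with a uniform $n_0$.

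For $V\in C_t$ with $t\ge 2$ I peel one Frobenius at a time. The same use of $(1)$ shows $H^1(\Omega^1_X\otimes V(-n))$ injects into $H^1\bigl(F^*\Omega^1_X\otimes F^*V(-np)\bigr)$ once the error $H^0(\Omega^1_X\otimes V\otimes B^1(-n))$ vanishes, and $F^*V\in C_{t-1}$; iterating $t$ times leaves the target $H^1\bigl(F^{t*}\Omega^1_X(-np^{t})\bigr)^r$ and the error terms $H^0\bigl(F^{j*}\Omega^1_X\otimes B^1\otimes F^{j*}V(-np^{j})\bigr)$ for $0\le j<t$ (equivalently one may bootstrap directly through sequence $(2)$, exactly as in the passage from $C_1$ to $C_2$ in Lemma~\ref{2}). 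The main obstacle is \emph{uniformity in $t$}: unlike Lemma~\ref{2}, where the coefficient $\sO_X$ is Frobenius‑invariant and the targets are the line bundles $\sO_X(-np^{t})$ which vanish for all $t$ simultaneously, here the coefficient grows, the bundles $F^{j*}\Omega^1_X$ are not semistable, and a priori both the Serre‑vanishing bound for $H^1\bigl(F^{t*}\Omega^1_X(-np^{t})\bigr)$ and the slope bound $\mu_{\max}(F^{j*}\Omega^1_X\otimes B^1)$ depend on the level.

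The resolution, and the point where the known facts about Frobenius enter, is that the instability of Frobenius pullbacks grows at most geometrically, $\mu_{\max}(F^*\sE)\le p\,\mu_{\max}(\sE)+(p-1)\,\mu_{\max}(\Omega^1_X)$, which iterates to $\mu_{\max}(F^{j*}\Omega^1_X)+\mu_{\max}(\Omega^1_X)\le p^{j}\bigl(\mu_{\max}(\Omega^1_X)+\mu_{\max}(\Omega^1_X)\bigr)$; in particular $p^{-j}\mu_{\max}(F^{j*}\Omega^1_X)$ is bounded independently of $j$. Since the error term at level $j$ needs only $np^{j}>\mu_{\max}(F^{j*}\Omega^1_X\otimes B^1)$, the extra factor $p^{j}$ in the twist exactly absorbs the geometric growth of the coefficient, and the same compensation (dividing the regularity of $F^{t*}\Omega^1_X$ by $p^{t}$) controls the final target. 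Hence the lower bound on $n$ stabilizes to a single integer $n_0$ valid for every $t$, that is, for every $V\in FT(X)$, which is the desired uniform statement. I expect the verification of this geometric instability bound, and its bookkeeping through the iterated sequences, to be the only nontrivial ingredient; all remaining vanishings are either Serre vanishing for the fixed bundles $F^{j*}\Omega^1_X$ or the slope estimate for strongly semistable $V$.
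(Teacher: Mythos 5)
Your argument for $C_1(X)$ is sound, but the induction on $t$ has a genuine gap exactly at the point you dismiss as bookkeeping: the final target $H^1\bigl(F^{t*}\Omega^1_X(-np^{t})\bigr)$. For this to vanish for all $t$ simultaneously once $n\geq n_0$, you need a Serre-type vanishing that is uniform along the whole family $\{F^{t*}\Omega^1_X\}_{t\geq0}$. This does not follow from Serre vanishing applied to each fixed bundle $F^{t*}\Omega^1_X$ (which is how you correctly handled $t=1$), because that bound depends on the bundle and hence on $t$; nor does vanishing propagate up the tower, since $H^1(W)=0$ never implies $H^1(F^{*}W)=0$. Your proposed remedy --- that the regularity of $F^{t*}\Omega^1_X$ (equivalently, after Serre duality, of $F^{t*}T_X\otimes\omega_X$) grows at most like $p^{t}$ --- is itself an unproven assertion of essentially the same depth as the lemma: the Chern classes of $F^{t*}\Omega^1_X$ blow up with $t$, so one needs effective regularity bounds in terms of $\mu_{\max}$, $\mu_{\min}$ and the discriminant (Langer's machinery), or else Karen Smith's uniform Frobenius--Serre vanishing [S, Thm.~3.5], which is precisely the theorem designed for this purpose. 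A similar, though less critical, remark applies to your instability bound $\mu_{\max}(F^{*}\sE)\leq p\,\mu_{\max}(\sE)+(p-1)\mu_{\max}(\Omega^1_X)$ and to the subadditivity of $\mu_{\max}$ under tensor product in characteristic $p$: bounds of this shape do exist (Shepherd-Barron, Sun, Langer), but they are theorems, not bookkeeping. So as written the proof is incomplete at its crux.

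The paper sidesteps the entire difficulty with one move you did not consider: it never applies Frobenius to $\Omega^1_X$ at all. Choose $s$, depending only on $X$, with $T_X(s)$ globally generated, and dualize the surjection $\sO_X^N\to T_X(s)$ to get $0\to\Omega^1_X(-s)\to\sO_X^N\to S\to0$ with $S$ a \emph{fixed} sheaf. Tensoring this with $V(-t)$ reduces the lemma to two uniform vanishings with trivial or fixed coefficients: $H^0(V(-t)\otimes S)=0$ (your slope argument, applied to the fixed $\sG=S$) and $H^1(V(-t))=0$. The latter is exactly the situation of Lemma 2.2: iterating $0\to\sO_X\to F_*\sO_X\to B^1\to0$, the error terms are $H^0(F^{j*}V(-tp^{j})\otimes B^1)$, uniform by slopes since $B^1$ is fixed, and the final target is $H^1(\sO_X(-tp^{r}))^{\rank V}$, which vanishes by E-S-Z-S for $\dim X\geq2$, uniformly in $r$ for free, because the coefficient bundle is $\sO_X$ and Frobenius does not move it. In short, your route forces you to control Frobenius pullbacks of a nontrivial bundle, which is the hard (Smith/Langer) part, while the paper's reduction makes the coefficient Frobenius-invariant before iterating, so only the elementary vanishing for $\sO_X$ is ever needed.
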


\begin{proof}

Let $T_X$ be the tangent bundle of $X$. For some $s$ , depending only on
$X$, $T_X(s)$ is generated by global sections. So we have $$0 \rightarrow
S^* \rightarrow \sO_X^N  \rightarrow T_X(s) \rightarrow 0.$$  Dualizing,we 
get $$ 0 \rightarrow \Omega^1(-s) \rightarrow \sO_X^N \rightarrow S 
\rightarrow   0 \eqno{(1)}$$.Tensor $(1)$ with $V(-t)$,to get $$ 0 
\rightarrow V(-t) \otimes \Omega^1(-s) \rightarrow  V(-t)\otimes \sO_X^N 
\rightarrow  V(-t)\otimes S \rightarrow 0\eqno{(2)}$$. Applying $F^r$ to 
$(2)$
we get $$ 0 \rightarrow [V(-t) \otimes \Omega^1(-s)]^{p^r} \rightarrow 
[V(-t)\otimes \sO_X^N]^{p^r} \rightarrow [V(-t) \otimes S]^{p^r} 
\rightarrow 
0\eqno{(3)}$$. It is clear that $H^0(V(-t) \otimes S) = 0$ for $t >> 0$,
 \emph{independent} of $V$. Now look at $$H^1(V(-t) \otimes \Omega^1(-s))
\rightarrow H^1(V(-t) \otimes \sO_X^N) \rightarrow H^1(V(-t) 
\otimes \sO_X^N)^{p^r} \rightarrow 0\eqno{(4)}$$ where the right arrow is 
the 
map induced by $F^r$.But one knows that $H^1(V(-t) \otimes \sO_X^N)^{p^r}$
vanishes for $t >> 0$, for a uniform $t$.  
And $H^1[V(-t)) \otimes \Omega^1(-s)] \rightarrow  H^1[V(-t) \otimes 
\sO_X^N]$ is injective for $t >> 0$, for a uniform $t$ . 

 Also $H^1[V(-t) \otimes \sO_X^N] \rightarrow  H^1[V(-t) \otimes
\sO_X^N]^{p^r}$  is injective for $t >> 0$, for a uniform $t$. 
 Hence $H^1(V(-t) \otimes \Omega^1)$ vanishes for all $t >> 0$,
 for a uniform $t$. This concludes the proof of Lemma 3.3. 
\end{proof}

\begin{Remark} In fact ,the above proves the following : Let W be an 
arbitrary vector bundle on $X$. Then there exists a uniform $t_0$, such 
that for all $t \geq t_0$, and for all $V \in FT(X)$,we have $H^1( V 
\otimes W(-t)) = 0$. This lemma  and remark are the \emph {key} points in 
the  paper, and will be used over  and over again, without explicit 
mention.   
\end{Remark}

\begin{Lemma}\label{3}Any $W \in C_t(Y)$ lifts uniquely to an element
$V \in C_t(X)$ ,if $degree Y = n >> 0$, n is uniform.
\end{Lemma}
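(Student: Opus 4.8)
The plan is to show that the restriction functor $C_t(X)\to C_t(Y)$ is an equivalence of Tannaka categories, of which this Lemma records the essential-surjectivity part together with uniqueness of the lift. The uniqueness, and more generally full faithfulness, is the easy half: for $V,V'\in C_t(X)$ one tensors the sequence $0\to\sO_X(-n)\to\sO_X\to\sO_Y\to 0$ with $V^*\otimes V'\in FT(X)$ and invokes the Remark following Lemma 3.3, which gives $H^0(X,V^*\otimes V'(-n))=H^1(X,V^*\otimes V'(-n))=0$ for a uniform $n$. Hence $\Hom(V,V')\to\Hom(V|_Y,V'|_Y)$ is bijective, so any two lifts of a given $W$ are canonically isomorphic and it remains only to construct one.

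For existence I would extend $W$ first across the formal completion $\hat X$ of $X$ along $Y$ and then algebraize. Writing $Y=Y_1\subset Y_2\subset\cdots$ for the infinitesimal neighbourhoods, with ideal $\sI=\sO_X(-n)$ and $\sI^m/\sI^{m+1}\cong\sO_Y(-mn)$, the lifting of a locally free sheaf from $Y_m$ to $Y_{m+1}$ is obstructed by a class in $H^2(Y,\sEnd(W)(-mn))$ and, when unobstructed, the lifts form a torsor under $H^1(Y,\sEnd(W)(-mn))$. Since $\sEnd(W)\in FT(Y)$, the $Y$-analogue of the Remark makes all the torsor groups $H^1(Y,\sEnd(W)(-mn))$ vanish for $m\geq 1$ and a uniform $n$; thus the formal lift is unique once it exists. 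Granting existence, Grothendieck's existence theorem together with the effective Lefschetz condition — valid because $X$ is smooth of dimension $\geq 3$ and $Y$ is ample — algebraizes the resulting formal bundle to an honest $V\in Vect(X)$ with $V|_Y\cong W$, the algebraization functor being faithful.

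The hard part is the existence of the formal lift, because the obstruction groups $H^2(Y,\sEnd(W)(-mn))$ do \emph{not} vanish in general: for $\dim Y=2$ they are Serre-dual to the sections of a large positive twist of a slope-zero bundle, and for larger $\dim Y$ the Remark only yields $H^1$-vanishing. One must therefore show that the obstruction \emph{classes} vanish, and here I would argue with Frobenius exactly as in the proof of Theorem 2.1. Applying $F^t$ and using $F^{t*}W\cong\sO_Y^r$ carries the obstruction for $W$ to the obstruction for the trivial bundle $\sO_Y^r$, which is zero since $\sO_Y^r$ visibly extends to $\sO_X^r$. It is then enough that the Frobenius-pullback map on $H^2(Y,\sEnd(W)(-mn))$ be injective; tensoring $0\to\sO_Y\to F_*\sO_Y\to B^1\to 0$ with $\sEnd(W)(-mn)$ reduces this to the vanishing $H^1(Y,B^1\otimes\sEnd(W)(-mn))=0$. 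That last vanishing holds for $m\geq 1$ and a uniform $n$ by the Remark with auxiliary bundle $B^1$, and it is precisely here that Lemma 3.3 does the real work: via the Cartier sequence expressing $B^1$ in terms of $\Omega^1_Y$, the $\Omega^1$-vanishing of (the $Y$-analogue of) Lemma 3.3 is what forces $H^1(Y,B^1\otimes\sEnd(W)(-mn))$ to vanish. The delicate point to verify is the compatibility of $F^t$ with the thickenings $Y_m$, i.e. that $F^t$ genuinely matches the extension problem for $W$ with that for $\sO_Y^r$; this is where the facts about $p$-curvature from [K1] are needed.

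Finally I would verify $V\in C_t(X)$, i.e. that $F^{t*}_X V$ is trivial. Its restriction to $Y$ is $F^{t*}_Y W\cong\sO_Y^r$, so $F^{t*}_X V$ and $\sO_X^r$ are two lifts of $\sO_Y^r$; by the uniqueness above — needing now only $H^1(Y,\sO_Y(-mn))=0$ for $m\geq 1$, which is the E-S-Z-S lemma — they agree, whence $F^{t*}_X V\cong\sO_X^r$. The reduction from general $t$ to $t=1$ runs, as in Theorem 2.1, through the filtration given by the cokernels of $F^t$, so no vanishing beyond the Remark and Lemma 3.3 is needed. I expect the Frobenius--thickening compatibility of the previous paragraph to be the genuine obstacle; the rest is bookkeeping with E-S-Z-S vanishing.
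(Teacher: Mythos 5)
Your route—deformation along the infinitesimal thickenings $Y_m$, a Frobenius argument to kill the obstruction classes in $H^2$, then algebraization via Grothendieck existence and the effective Lefschetz condition—is entirely different from the paper's proof, which never enters formal geometry: for $t=1$ the paper lifts the trivializing matrix $M$ of $1$-forms through the isomorphism $H^0(X,\Omega^1_X)\simeq H^0(Y,\Omega^1_Y)$, checks by $H^0$-injectivity (E-S-Z-S applied to \emph{fixed} sheaves on $X$) that the lifted connection is integrable with $p$-curvature zero, and recovers $V$ by Cartier descent [K1]; for $t>1$ it inducts on $t$, using Claims 1--4 to transfer $p$-flat connections of \emph{already lifted} bundles from $Y$ to $X$. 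This structural difference is not cosmetic, because it is exactly what secures uniformity, and uniformity is where your argument has a genuine gap. Your key vanishings are statements on $Y$ about bundles in $FT(Y)$: the torsor groups $H^1(Y,\sEnd(W)(-mn))$, and above all $H^1(Y,B^1_Y\otimes\sEnd(W)(-mn))$ (and its analogues for $F^{s*}W$) needed for Frobenius-injectivity on $H^2$. You justify these by ``the $Y$-analogue'' of Lemma 3.3 and its Remark. But the proof of Lemma 3.3 begins by choosing $s$ with $T_X(s)$ globally generated, and on $Y\in|nH|$ no such $s$ independent of $n$ exists: by adjunction $\det T_Y=-K_X|_Y-\sO_Y(n)$, so $T_Y(s)$ has negative degree once $n\gg s$ and cannot be globally generated. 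Hence the $Y$-analogue comes with a threshold depending on $Y$, while $Y$ itself varies in $|nH|$; demanding $n\geq n_0(Y)$ is circular, and the conclusion ``$n$ uniform, depending only on $X$''---the entire content of the Lemma and of Theorem 3.1---is lost. Note that $W\in C_t(Y)$ is precisely the bundle not yet lifted, so none of the uniform statements proved on $X$ (Lemma 3.3, the Remark, Claims 1--2) apply to it; uniform cohomological control of arbitrary $FT(Y)$-bundles is essentially equivalent to the Lemma one is trying to prove. The paper's proof is engineered to avoid this trap: every Cartier-sequence and Frobenius computation is run on $X$ with $V\in FT(X)$, and the output is pushed to $Y$ only through restriction sequences whose error terms are uniform $H^2$'s on $X$ (which is where $\dim X\geq 3$ enters).

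A second, lesser gap is the ``Frobenius--thickening compatibility'' that you flag and then defer to [K1]. Since $F^*\sI=\sI^p$, Frobenius induces morphisms $Y_{pm}\to Y_m$, so one extension step for $W$ corresponds to $p$ steps for its pullback, and the functorial comparison of obstruction classes lands in an $H^2$ over the thickened scheme $Y_p$ with coefficients in $\sO_{Y_p}(-pmn)$, not in the group $H^2(Y,F_Y^*(\sEnd(W)(-mn)))$ that your Cartier-sequence reduction addresses. This mismatch is probably repairable (the map $F_Y^*$ you consider factors through the correct comparison map, so injectivity of $F_Y^{t*}$ would suffice), but it requires an actual functoriality argument for obstruction classes under the finite flat maps $Y_{pm}\to Y_m$, and it has nothing to do with the $p$-curvature results of [K1]: those concern integrable connections and Cartier descent, which is the engine of the paper's proof, not of an obstruction calculus. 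So even setting aside the uniformity circularity, the step you yourself identify as ``the genuine obstacle'' is left unproved and is mis-attributed to a reference that does not contain it.
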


\begin{proof}
  First ,we give an idea of the proof We assume that dimension $X \geq 3$, 
and we pick an arbitrary smooth $Y 
\in |nH|$. First assume $t = 1$.Take a  $W \in C_1(Y)$ and we show that it 
lifts uniquely to $V \in C_1(X)$ . Such a $W$ is trivialized by the 
Frobenius,so there  exists $M$, an $r \times r$ matrix of 1- forms on 
$Y$,which gives  an integrable connection $\nabla$ on $\sO^r_Y$, with $p$- 
curvature $0$. Consider $$ 0 \rightarrow \frac{I}{I^2} \rightarrow  
\Omega^1_X/Y \rightarrow  \Omega^1_Y \rightarrow  0\eqno{(1)}$$ and $$0 
\rightarrow \Omega_X^1(-n) \rightarrow \Omega^1_X  \rightarrow 
\Omega^1_X/Y \rightarrow 0\eqno{(2)}$$
 
Here $I$ is the ideal sheaf of $Y, I = \sO_X(-n)$.  We get an integer 
$n_0$,  such that for all $n \geq n_0, H^0(X,\Omega^1_X) \to 
H^0(Y,\Omega^1_Y)$ is an isomorphism. So $M$ lifts to $\ M_1$, a  $r 
\times r$ matrix  of $1$ forms on $X$.Now  $\nabla$ has $p$-curvature $0$ 
on $Y$.We now show that $\nabla_1$ defined by $M_1$ has $p$-curvature $0$ 
on $X$. The  curvature of $\nabla$ is an element of $H^0(End \sO^r_Y 
\otimes \Omega^2_Y)$, Again by E-S-Z-S, the curvature of $\nabla_1$ is $0$ 
if degree $Y \geq n_1$, for some integer  $n_1$, depending only on $X$. 
The  $p$-curvature of $\nabla_1$ is an element of $H^0(F^*\Omega^1_X
\otimes End \sO^r_X)$, which again vanishes if deg $Y \geq n_2$. So any
element $W \in C_1(Y)$ lifts uniquely to an element $V \in C_1(X)$.

For $t > 1$,we use induction on $t$. Assume that for lower values of $t$,
any element $W \in C_t(Y)$ has been lifted uniquely to an element $V \in
C_t(X)$ ,where degree $Y = n$, where $n$ is uniform.We show in fact 
that there exists a uniform $n$ with the property:if $W \in C_{t +1}(Y)$, 
then $W$ lifts to $X$. This proceeds by showing that $F^*W$
lifts uniquely to $X$, and this lifted bundle, say $V_t$,on $X$ has an 
integrable $p$-flat connection. So on $X$, $V_t$ will descend 
under $F$ to $V_{t+1}$. And $V_{t+1}$ restricts to $W_{t+1}$ on $Y$. We 
begin the proof by  establishing a couple of claims: 

Claim (1) :  There exists a uniform $n$ such that for $V \in FT(X)$ and  
 for a $Y \in |nH|$   , we have $H^0(\sEnd V \otimes 
\Omega^1_Y(- n)) = 0$.
 
Proof of Claim 1): Look at 
$$0 \rightarrow  \sO_Y(-n) \rightarrow  \Omega^1_X/Y \rightarrow 
\Omega^1_Y \rightarrow  0\eqno{(1)}$$        
and 
$$ 0 \rightarrow \Omega^1_X(-n) \rightarrow  \Omega^1_X  \rightarrow
\Omega^1_X/Y \rightarrow 0\eqno{(2)}$$

 Tensor (2) with $\sEnd V(-n)$,  we get $H^0(\sEnd V(-n) \otimes 
\Omega^1_X/Y)
= 0$ ,for a uniform $n$. From (1) after tensoring  with $\sEnd V$, one 
sees  that $H^1(\sEnd V \otimes \sO_Y( - 2n)) = 0$ implies that 
$H^0(\sEnd V( - n) \otimes \Omega^1_Y)) = 0$. All this is for a uniform 
$n$.  Now we prove that $H^1(\sEnd V \otimes \sO_Y(- 2n)) = 0$ , for a
uniform $n$, in

Claim (2); One has $H^1(\sEnd V \otimes \sO_Y( - 2n) = 0$,
for any $V \in FT(X)$ and smooth $Y \in |nH|$, for a uniform $n$.

Proof of Claim (2): Look at $$0 \rightarrow  \sO_X \rightarrow F_*\sO_X  
\rightarrow  B^1 \rightarrow 0\eqno{(1)}$$, 
$$0 \rightarrow B^1 \rightarrow Z^1  \rightarrow \Omega^1
\rightarrow 0\eqno{(2)}$$,  
$$0 \rightarrow Z^1 \rightarrow  F_*\Omega^1 \rightarrow B^2  \rightarrow 
0\eqno{(3)}$$ 
This are sequences obtained from the Cartier operator applied to the 
DeRham complex $F_.(\Omega^._X)$. Tensor all the $3$ sequences by $\sEnd 
V(-n)$, and take $H^0$. From (2), 
we  get  $H^1(B^1 \otimes \sEnd V( - n))$ injects into  $H^1(Z^1 \otimes
\sEnd V( - n))$ for a uniform $n$. But by $3$, we see that $H^1(Z^1  
\otimes \sEnd V(-n))$ injects into  $H^1(F_*\Omega^1 \otimes \sEnd V(-n))$
for a uniform $n$. But the last cohomology group vanishes for $n >> 0$,
with $n$ uniform (Lemma 2.2  and [S,Th.3.5]). Therefore ,$H^1(B^1 \otimes
\sEnd V(-n))$ vanishes for $n >> 0$, with $n$ \emph {uniform}. Now look at 
$$0 \rightarrow \sO_X  \rightarrow  F_*\sO_X  \rightarrow B^1 \rightarrow 
0\eqno{(4)}$$ Tensor with $\sEnd V(-n)$ and take cohomology.As $H^1(B^1 
\otimes \sEnd V(-n))$ vanishes for $n >> 0$, one gets that $H^2(\sEnd V 
(-n))$ injects into $H^2(\sEnd V(-n))^{p^r}$ for all $r >> 0$, and for $n$ 
uniform. But this last group \emph {vanishes} for $n >> 0$, and uniform,
and for all $r$(Lemma 2.2 again). (This is where the hypothesis dim$X 
\geq 3$ is used).Now finally look at 
$$0 \rightarrow \sO_X(-3n) \rightarrow \sO_X(-2n)  \rightarrow \sO_Y(-2n)
\rightarrow 0\eqno{(5)}$$ and tensor with $\sEnd V$. We get 
$H^1(\sO_Y(-2n)
\otimes \sEnd V))$  vanishes for $n >> 0$, and n uniform. Hence,  
 we get $H^0(\sEnd V \otimes \Omega^1_Y(-n)) = 0$, with n uniform ,
 which completes the proof of Claim 1.
 
Claim (3): There is a uniform n with the property : let $Y \in |nH|$ and
 assume that $W \in FT(Y)$ has been lifted to $V \in FT(X)$. Then if 
$W$ has a  connection on $Y$ ,then $V$ on $X$ has a connection.

Proof of Claim (3):Look at $$0 \rightarrow \sO_Y(-n) \rightarrow 
\Omega^1_X/Y \rightarrow  \Omega^1_Y \rightarrow  0\eqno{(1)}$$ and
$$0 \rightarrow \Omega^1_X(-n) \rightarrow \Omega^1_X  \rightarrow
\Omega^1_X/Y \rightarrow  0\eqno{(2)}$$ 
Tensor with $\sEnd V$.  From (2) we get $H^1(\sEnd V  \otimes 
\Omega^1_X)$ injects into $H^1(\sEnd V \otimes \Omega^1_X/Y)$.
And from (1) and the proof of claim (2), we get $H^1(\sEnd V \otimes 
\Omega^1_X/Y)$ injects into $H^1(\sEnd V \otimes \Omega^1_Y)$. But $V$ 
restricted to $Y$ is $W$. Hence if $W$ has a connection ,so does $V$. 

Claim (4) : If this connection on $W$ is integrable  ,then the 
connection  on $V$ is also integrable.

Proof of Claim (4) : Look at $$0 \rightarrow  \Omega^1_Y(-n) \rightarrow
\Omega^2_X/Y \rightarrow  \Omega^2_Y \rightarrow 0\eqno{(1)}$$
and $$0 \rightarrow \Omega^2_X(-n) \rightarrow  \Omega^2_X \rightarrow
\Omega^2_X/Y \rightarrow 0\eqno{(2)}$$
Tensor with $\sEnd V$ and take $H^0$. One knows that $H^0(\sEnd V 
\otimes \Omega^1_Y( -n)$ vanishes for $n >> 0$ with $n$ uniform ,by the 
proof of Claim 1. Similiarly, $H^0(\sEnd V \otimes \Omega^2_X(- n)$ 
also vanishes for $n >> 0$, and $n$ uniform.So  $H^0(\sEnd V \otimes 
\Omega^2_X) \rightarrow  H^0(\sEnd V \otimes \Omega^2_Y)$
is injective for $n >> 0$, n \emph {uniform}. So if $W$ has an integrable 
connection, so does $V$. Finally, the $p$-curvature of $V$ is an element
of $H^0(\sEnd V \otimes  F^*\Omega^2_X)$ which injects into $H^0( \sEnd V
\otimes F^*\Omega^2_Y)$. So if the $p$-curvature on $W$ is $0$, so is the
$p$-curvature on $V$.

  Claims 1-4 imply that on $X$,if $V$ restricts to $W$ on $Y$, with degree 
$Y = n$ with n uniform and $W$ has a $p$-flat connection, then $V$ also 
has a $p$-flat connection. So if $W$ on $Y$ descends to $W_1$ ,
then $V$ also descends under $F$ to $V_1$, and that $V_1$ restricts to 
$W_1$ on $Y$. This continues to hold for $V \in C_t(X)$,
restricting to $W \in C_t(Y)$, any $t$, for $Y$  degree a uniform n, 
depending \emph{only} on $X$.   
Hence the canonical map of Tannaka Categories : $C_t(X) \rightarrow 
C_t(Y)$ , given by restriction from $X$ to $Y$, induces an 
\emph{isomorphism}:$$G_t(Y) \rightarrow G_t(X),$$ for all $t$.But 
 $$\pi^{loc}(X) \simeq \lim_{\leftarrow t}G_t(X)$$ and similiarly for 
$Y$. Since there are only finitely many choices of $n$, depending 
\emph {only on $X$}, this completes the proof of Lemma $3.5$ and hence of 
Theorem $3.1$.     
\end{proof}

\begin{Remark} It is also interesting to determine if the category
of F-trivial vector bundles ,on a smooth $X$,is \emph {$m_0$- regular}. 
This is the case if dim $X \leq 3$.
\end{Remark}

\begin{Remark} The discerning reader will notice that the only property of
F-trivial bundles which is used is the following : the set of 
\emph{isomorphism classes} of stable bundles, which occur in a stable 
filtration of $F^{n*}(V),n = 0,1,....$ is only \emph{finite} in number.
  
 If $V$ is only assumed to be \emph{essentially finite}[N1,p.82],then 
$\exists$  a Galois etale covering $ \pi : Z \to X$ such that $F^{m*}V$ is 
trivial on $Z$, for some $m$. Now one sees using [D1, Thm 2.3.2.4],that 
the set of \emph{stable} components of $F^{n*}(V), n = 0,1...$ is again  
\emph{finite}.  So  all the  proofs and propositions carry over with 
$\pi^{loc}X$  replaced  by $\pi(X)$,making use of Lemma 3.3.  We leave 
the details as an exercise. 

\end{Remark}


\begin{thebibliography}{}

\bibitem[BH1] {Indranil Biswas, Yogesh Holla} {\it  Comparison of fundamental 
group schemes of a projective variety and an ample hypersurface},
arXiv.math /0603299 v1 [math AG],13 March 2006.

\bibitem[BH2]{BH2} {Indranil Biswas, Yogesh  Holla} {\it  Comparison
of fundamental group schemes of a projective variety and an ample 
hypersurface}, arXiv.math /0603299vi [math AG], 1st May 2007.  

\bibitem[BH3]{BH3} {Indrani Biswas, Yogesh Holla} {\it  Comparison of
 fundamental group schemes of a projective variety and an ample 
hypersurface}, Journal of Algebraic Geometry, 16 (2007),No.3, 547-597.

\bibitem[D1]{D1} {Laurent Ducrohet} {\it Frobenius et Fibres sur les 
Courbes}, Thesis,University of Paris 6, 2006. 

\bibitem[K1]{K1}{N.Katz} {\it Nilpotent Connections and the Monodromy 
Theorem ...... } Publ.Math. I.H.E.S. No.39 (1970), 175-232.

\bibitem[L1]{L1} {Adrian Langer} {\it Semistable Sheaves in Positive 
 Characteristic},Ann. of Math.,256 (2004), 251-276

\bibitem[L2] {L2} { Langer, Adrian Semistable 
principal $G$-bundles in positive characteristic},  Duke 
Math. J.  128  (2005),  no. 3, 511--540.  

 
\bibitem[MN] {MN}  {\it Semistable Sheaves on Homogeneous Spaces and 
Abelian Varieties}, Proc. Indian Acad. Sci.(Math Sci.)Vol. 93 , No. 1,
1-12.

\bibitem[MS1]{MS1}{V. B. Mehta, S.Subramanian} {\it On the Fundamental 
group scheme} Invent. Math. {\bf 148},(2002),143-150.

\bibitem[MS2]{MS2}{V.B. Mehta, S.Subramanian} {\it Some Remarks on The 
Local Fundamental Group Scheme}  Proc. of the Ind. Acad. of Sciences, 
 (Math. Sci.),Vol.118,No.2, 2008,pp.207-211.

\bibitem[N1]{N1}{M.V.Nori} {\it The Fundamental Group Scheme},Proc. of the 
Ind. Acad. of Sci.(math Sci.),{\bf 91},1982.

\bibitem[N2]{N2}{M.V.Nori} {\it Representations of the Fundamental 
Group},Compositio Math.{\bf 33}, 1976.

\bibitem[S-B]{S-B} {N.I. Shepherd-Barron} {\it Semi-Stability and
Reduction mod p}, Topology, Vol. 37,No.3, pp.659-664,1998.

\bibitem[S]{S} {Karen Smith}  { \it Vanishing,Singularities And Effective
Bounds Via Prime Characteristic Local Algebra},Alg. Geometry,Santa 
Cruz,1995,289-325,Proc. Sympos. Pure  
Math.,62,Part1,Amer.Math.Soc.,Providence,RI,1997

\bibitem[Su]{Su} {S. Subramanian}  {\it Strongly Semistable Bundles on a 
Curve over a Finite Field} , Arch. Math. 89 (2007), 68-72


\end{thebibliography}
\end{document}